\newcommand{\F}{\mathbb{F}}
\newcommand{\R}{\mathbb{R}}
\newcommand{\PP}{\mathbb{P}}
\newcommand{\EE}{\mathbb{E}}
\newcommand{\dd}{\mathrm{d}}
\newcommand{\AY}{Az\'{e}ma-Yor }
\newcommand{\indicator}[1]{\mathbbm{1}_{\left\{ {#1} \right\} }}
\newcommand{\E}[1]{\mathbb{E}\left[ {#1} \right] }
\newcommand{\Prob}[1]{\mathbb{P}\left[ {#1} \right] }
\newcommand{\bb}{\boldsymbol{b}}
\newcommand\restr[2]{{
  \left.\kern-\nulldelimiterspace 
  #1 
  \vphantom{\big|} 
  \right|_{#2} 
  }}
\theoremstyle{plain}
\newtheorem{Theorem}{Theorem}[section]
\newtheorem{Assumption}{Assumption}
\theoremstyle{definition}
\newtheorem{Definition}[Theorem]{Definition}
\theoremstyle{remark}
\newtheorem{Remark}[Theorem]{Remark}
\numberwithin{equation}{section}
\numberwithin{figure}{section}
\title{ 
Characterization of Market Models in the Presence of Traded Vanilla and Barrier Options}
\author{
Peter Spoida \thanks{
The author thanks Samuel Cohen, Alexander Cox, Marek Musiela and Jan Ob\l{}\'{o}j for insightful discussions.
\newline 
\href{mailto:peter.spoida@gmail.com}{peter.spoida@gmail.com} 
}
}
\begin{document}
\date{\today}
\maketitle

\begin{abstract}

We characterize the set of market models when there are a finite number of traded Vanilla and Barrier options with maturity $T$ written on the asset $S$.
From a probabilistic perspective, our result describes the set of joint distributions for $(S_T, \sup_{u \leq T} S_u)$ when a finite number of marginal law constraints on both $S_T$ and $\sup_{u \leq T} S_u$ is imposed.
An extension to the case of multiple maturities is obtained.

Our characterization requires a decomposition of the call price function and once it is obtained, we can explicitly express certain joint probabilities in this model. 
In order to obtain a fully specified joint distribution we discuss interpolation methods.




\end{abstract}

\section{Introduction}

Calibration of models to market data is one major challenge in Mathematical Finance.
Typically, one uses call option prices to incorporate information about univariate distributional properties.
In some markets there are in addition options which are informative about joint distributional properties.
Here we are interested in the case when there are traded Vanilla and Barrier options.
We obtain a characterization of these joint distributions for the stock and its running maximum in the case of traded options with multiple maturities.
Our characterization requires a kind of decomposition of certain call price functions and once it is obtained, we have an explicit expression for certain joint probabilities in the models characterized by this decomposition.
We discuss interpolations of these joint probabilities which yield a fully specified marginal joint distribution which is consistent with the market.

Once one is given marginal joint distributions, we note that there are methods for defining diffusion-type models consistent with these marginals, cf. research by \citet{CarrGamma}, \citet{CHO11timehomodiff}, \citet{RePEc:eee:spapps:v:121:y:2011:i:12:p:2802-2817} and \citet{Forde13}.

Our method of proof relies on theory related to the Skorokhod embedding problem, see \citet{Obloj:04b} for a survey.

The joint laws of a martingale and its maximum have been characterized by \citet{Kertz90} and \citet{Rogers:93}.


\paragraph*{Notation}
The underlying asset will be denoted by $S$. For its maximum we write $M_T = \sup_{t \leq T} S_t$. 
The standard Markovian time-shift operator is denoted by $\theta_t(\omega):= ( \omega_{u} )_{u \geq t}$.
The first hitting time of $B$ is denoted by $H_B$.

\section{Characterization of Market Models}
\label{sec:Characterization of Market Models}

In this section we present our main results. 
We characterize the existence of a market model, both in the case of a single and multiple maturities.

\subsection{Market Data and Market Models}

Suppose $c({K_1}), \dots, c({K_n})$ are the prices for call options with strikes $0 < K_1 < \dots < K_n$, respectively.
Further, let $\bb = \big( b(B_1), \dots, b(B_m) \big)$ be the prices for simple barrier options $\indicator{S_T \geq B_j}$ with barrier levels $S_0 =: B_0 < B_1 \leq \dots \leq B_m$, respectively.

\begin{Definition}[Market Model]
\label{def:market_model}
A market model is a filtered probability space $(\Omega, \mathcal{F}, \F, \PP)$ where the filtration $\F=(\mathcal{F}_t)_{t \in [0,T]}$ satisfies the usual hypothesis and there is an $\F$-adapted martingale $S$ defined on this space satisfying
\begin{subequations}
\begin{alignat}{3}
\E{ \left( S_T - K_i \right)^+ } &= c(K_i) , \qquad &&i=1,\dots,n, \label{eq:market_model_1}\\
\E{ \indicator{ M_T \geq B_j } } &= b(B_j), \qquad &&j=1,\dots,m.
\label{eq:market_model_2}
\end{alignat}
\end{subequations}


Using this definition, it is clear how to extend the notion of market model to a setting where there are options with multiple maturities.
\end{Definition}


For our main result we require the following assumption.
\begin{Assumption}[Asset]
\label{ass:asset and market}
Assume that the asset $S$ 
\begin{itemize}
	\item[(a)] has continuous trajectories,
	\item[(b)] has zero cost of carry (e.g. when interest rates are zero),
	\item[(c)] is (strictly) positive.
\end{itemize}
\end{Assumption}


\subsection{One Maturity}

We now state and prove our main result in the case of one maturity.
This result will be extended to multiple maturities in the next section. 
The proof of this extension will be an induction over the number of maturities and hence will rely on the one maturity statement. 

\begin{Theorem}[Characterization Market Model -- One Maturity]
\label{thm:Characterization Market Model - One Maturity}
Let Assumption \ref{ass:asset and market} hold.
Then there exists a market model if and only if 
\begin{itemize}
	\item[1.] there exists a call price function\footnote{i.e. a non-negative convex function $c$ such that $-\frac{\partial c}{\partial x}(0+) \leq 1$ and $c(x) \to 0$ as $x \to \infty$, cf. \citet{DavisHobson07}.} $c_{\mu}(\cdot) = \int ( x-\cdot )^+ \mu(\dd x)$ which interpolates the given call prices $c(K_1),\dots,c(K_n), \ -c'_{\mu}(0+) = 1$.
	\item[2.] there exist $\mathfrak{c}^{B_1},\dots , \mathfrak{c}^{B_m} $ such that for all $j=1,\dots,m$,
\begin{subequations}
\begin{alignat}{3}
&\mathfrak{c}^{B_j} : \R_{\geq 0} \to [0,S_0]   \qquad && \text{is convex,} \label{eq:cond_1} \\
&0 \leq \mathfrak{c}^{B_1} \leq  \dots \leq \mathfrak{c}^{B_m} \leq \mathfrak{c}^{B_{m+1}} := c_{\mu}, && 
\label{eq:cond_2} \\
-&\frac{\dd \mathfrak{c}^{B_j}}{\dd x }(0+) = 1, \quad \mathfrak{c}^{B_j}(0) = S_0, \quad \mathfrak{c}^{B_j}(x) = 0, \qquad && \forall x \geq B_j, 
\label{eq:cond_3} \\
-&\frac{\dd \mathfrak{c}^{B_j}}{\dd x }(B_j-) = b(B_j),  && 
\label{eq:cond_4}\\
&x \mapsto \mathfrak{c}^{B_{j+1}}(x) - \mathfrak{c}^{B_j}(x) +  b(B_j)  ( B_j - x )^+ \qquad &&\text{is convex.} \label{eq:cond_5}
\end{alignat}
\end{subequations}
\end{itemize}

This market model can be chosen with bounded support (but does not have to).

Furthermore, in the market models characterized by \eqref{eq:cond_1}--\eqref{eq:cond_5} we have for $0 \leq x < B_j$,
\begin{align}
\Prob{ S_T > x, M_T < B_j } = -\frac{\dd \mathfrak{c}^{B_j}}{\dd x}(x+) -b(B_j).
\label{eq:joint_prob}
\end{align}
\end{Theorem}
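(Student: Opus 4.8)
The plan is to prove both directions via the Skorokhod embedding problem (SEP). Since $S$ has continuous trajectories, zero cost of carry, and is a positive martingale, by time-change (Dambis--Dubins--Schwarz) we may reduce to Brownian motion started at $S_0$, and the law of $(S_T, M_T)$ is realized by some stopping time $\tau$ embedding the marginal $\mu$ (the law of $S_T$). Conversely, any embedding gives a market model after the inverse time-change. So the whole problem becomes: characterize which pairs of constraints on the terminal law and on the maximum of the stopped Brownian motion can be met simultaneously. The natural tool here is the Az\'ema--Yor / Root / Ob\l{}\'oj-type machinery relating the barycentre function of $\mu$ to the law of the maximum; in particular one uses that for an embedding maximizing the law of the maximum given $\mu$, the distribution function of $M_T$ is determined by the call price function, and more refined embeddings allow prescribing $\Prob{M_T \geq B_j}$ at finitely many levels.

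First I would establish the key identity \eqref{eq:joint_prob} at the level of a candidate embedding: define, for each barrier level $B_j$, the function $\mathfrak{c}^{B_j}(x) := \E{ (S_T - x)^+ \indicator{M_T < B_j} } + (\text{boundary correction})$, i.e. the ``call price restricted to paths that stay below $B_j$.'' A direct computation (differentiating in $x$ the expectation of $(S_T-x)^+ \indicator{M_T<B_j}$, which is legitimate by dominated convergence since $S_T$ is integrable) gives $-\frac{\dd}{\dd x}\E{(S_T-x)^+\indicator{M_T<B_j}}(x+) = \Prob{S_T > x, M_T < B_j}$. Then one must identify the additive constant/affine piece: on $\{M_T \geq B_j\}$ the path has hit $B_j$, and on $[0,B_j)$ one expects a contribution of the form $b(B_j)(B_j - x)$ coming from the overshoot-free continuous crossing of $B_j$ (here continuity of $S$ is essential — there is no overshoot, so a path with $M_T \geq B_j$ contributes exactly its value relative to the level, and the martingale/optional-stopping argument at $H_{B_j}$ pins down the coefficient as $b(B_j) = \Prob{M_T \geq B_j}$). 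Matching these yields both \eqref{eq:cond_4} (the left-derivative at $B_j$ equals $b(B_j)$) and the representation \eqref{eq:joint_prob}. Conditions \eqref{eq:cond_1}--\eqref{eq:cond_3} are then read off as structural properties of $\mathfrak{c}^{B_j}$: convexity because it is (up to an affine term) an expectation of convex payoffs; the range $[0,S_0]$ and the boundary values at $0$ and at $B_j$ from $\mathfrak{c}^{B_j}(0) = \E{S_T \indicator{M_T < B_j}} + b(B_j)B_j = S_0$ by optional stopping, and $\mathfrak{c}^{B_j} \equiv 0$ on $[B_j,\infty)$ since $\{S_T > x\} \cap \{M_T < B_j\} = \emptyset$ for $x \geq B_j$. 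The monotonicity chain \eqref{eq:cond_2} is immediate from $\{M_T < B_j\} \subseteq \{M_T < B_{j+1}\}$, with the top element $c_\mu$ corresponding to ``no barrier.'' Finally \eqref{eq:cond_5}: the function $\mathfrak{c}^{B_{j+1}}(x) - \mathfrak{c}^{B_j}(x) + b(B_j)(B_j-x)^+$ equals $\E{(S_T-x)^+\indicator{B_j \leq M_T < B_{j+1}}}$ plus a correction that makes it the call price of a non-negative random variable living on the event $\{B_j \le M_T < B_{j+1}\}$, hence convex in $x$.

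For the converse (sufficiency), given data satisfying \eqref{eq:cond_1}--\eqref{eq:cond_5} I would construct the embedding explicitly. The decomposition $\mathfrak{c}^{B_1} \le \dots \le \mathfrak{c}^{B_m} \le c_\mu$ with the convexity of the increments in \eqref{eq:cond_5} is exactly what is needed to define, via second derivatives, a family of sub-probability measures $\mu_j$ (the law of $S_T$ on $\{B_j \le M_T < B_{j+1}\}$) that sum to $\mu$; conditions \eqref{eq:cond_3}--\eqref{eq:cond_4} ensure the masses and barycentres are compatible with a Brownian path that crosses exactly the levels $B_1, \dots, B_j$ before being stopped. One then builds the stopping time inductively over the barriers: run Brownian motion until it either is stopped according to an Az\'ema--Yor-type embedding of $\mu_0$ below $B_1$, or hits $B_1$; on the latter event, shift and recurse with the remaining mass and barriers $B_2, \dots, B_m$ (this is where the time-shift operator $\theta_t$ and the strong Markov property enter, and why the multiple-maturity extension in the next section is an induction resting on this case). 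The bounded-support claim follows by choosing at the final stage an embedding of the residual measure with compact support (e.g. a Root/Rost-type or a bounded Az\'ema--Yor construction), which is possible because $\mu$ can be taken compactly supported when interpolating finitely many call prices.

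The main obstacle I anticipate is the sufficiency construction: showing that conditions \eqref{eq:cond_1}--\eqref{eq:cond_5} are not merely necessary but actually \emph{enough} to guarantee an embedding exists. The delicate point is that at each barrier $B_j$ one must embed a prescribed sub-distribution with a prescribed mass \emph{and} ensure the leftover mass that proceeds past $B_j$ has a barycentre equal to $B_j$ (so that the martingale property and the strong Markov restart are consistent) — this barycentre constraint is encoded in the derivative conditions \eqref{eq:cond_3}--\eqref{eq:cond_4}, and verifying that the convexity of the increments \eqref{eq:cond_5} is precisely the obstruction-free condition (an ``atom-free'' or ``potential-ordering'' condition in SEP language) requires care. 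A secondary technical nuisance is the handling of left/right derivatives and possible atoms of $\mu$ or of the law of $M_T$ at the barrier levels, which is why the statement is phrased with one-sided derivatives $c'_\mu(0+)$, $\frac{\dd \mathfrak{c}^{B_j}}{\dd x}(B_j-)$, etc.; these must be tracked consistently throughout.
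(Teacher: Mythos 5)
Your proposal is correct and takes essentially the same route as the paper: necessity via the Dambis--Dubins--Schwarz time change and the barrier-stopped call functions $\mathfrak{c}^{B_j}(x)=\E{ \left( X_{\rho_T \wedge H_{B_j}} - x \right)^+ }$ (your ``restricted call plus boundary correction'' is exactly this object), and sufficiency via the barrier-by-barrier inductive Skorokhod embedding with a strong Markov restart at each level $B_{j}$, which is precisely the paper's construction of the stopping times $\gamma_1 \leq \dots \leq \gamma_{m+1}$ from the normalized increments of the $\mathfrak{c}^{B_j}$. The only minor difference is that you verify \eqref{eq:cond_5} directly as a sum of convex functions, whereas the paper routes the computation through an auxiliary independent stopping time $\psi$ started at $B_j$; this does not change the substance of the argument.
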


\begin{proof}
Let $(\Omega, \mathcal{F}, \F, \PP)$ be a market model, i.e.
\begin{alignat*}{5}
\E{ \left( S_T - K_i \right)^+ } &= c(K_i) \qquad &&\forall && i&&=1,\dots,n, \\
\Prob{ M_T \geq B_j } &= b(B_j) \qquad &&\forall && j&&=1,\dots,m.
\end{alignat*}
By continuity of $S$ the Dambis-Dubins-Schwarz time change yields
\begin{align*}
\left( S_t \right)_{t \leq T} = \left( X_{\rho_t} \right)_{t \leq T}
\end{align*}
where $X$ is Geometric Brownian motion started at $S_0$ and $\rho_t = \langle \log(S) \rangle_t$.

Define
\begin{align}
\mathfrak{c}^{B_j}(x) := \E{ \left( X_{\rho_T \wedge H_{B_j}} - x \right)^+ } \qquad j=1,\dots,m, \ \ \ x \in \R.
\end{align}
Clearly $\mathfrak{c}^{B_1}, \dots, \mathfrak{c}^{B_m}$ satisfy \eqref{eq:cond_1}--\eqref{eq:cond_3}.

Note 
\begin{align*}
b(B_j) = \Prob{ M_T \geq B_j } = \Prob{ X_{ \rho_T \wedge H_{B_j} } \geq B_j } = -\frac{\dd \mathfrak{c}^{B_j}}{\dd x }(B_j-)
\end{align*}
which is condition \eqref{eq:cond_4}.
Note also 
\begin{align*}
b(B_j) = \Prob{ \rho_T \geq H_{B_j} }.
\end{align*}
Let $X^{B_j}$ denote a Geometric Brownian motion started at $B_j$.
We have for $j<m$,
\begin{alignat}{3}
& && \mathfrak{c}^{B_{j+1}}(x) - \mathfrak{c}^{B_j}(x) 
\nonumber \\
=& && \E{ \indicator{ \rho_T < H_{B_j } } \left( X_{\rho_T \wedge H_{B_j} } - x \right)^+ + \indicator{ \rho_T \geq H_{B_j} } \left( X_{\rho_T \wedge H_{B_j} } - x \right)^+    } \nonumber \\
&-&& \Prob{ \rho_T \geq H_{B_j} } \cdot ( B_j - x )^+  - \E{ \left( X_{\rho_T \wedge H_{B_j} } - x \right)^+ } 
\nonumber  \\
&+&& \E{ \left( X_{\rho_T \wedge H_{B_{j+1}}} - x \right)^+ \indicator{ \rho_T  \geq H_{B_j} } } 
\nonumber \\
=& &&\Prob{ \rho_T \geq H_{B_j} } \cdot \left( -\left( B_j - x \right)^+ + \E{ \left( X^{B_j}_{{\psi}} - x \right)^+ } \right) 
\label{eq:condition_5_necessity}  
\end{alignat}
where $\psi$ is an independent stopping time (independent of $X$) such that 
\begin{align*}
\Prob{ \psi \leq x } &= \Prob{ (\rho_T \wedge H_{B_{j+1}} )- H_{B_j} \leq x \big | \rho_T \geq H_{B_j}}.
\end{align*}
These two properties of $\psi$ together with the Markov property of $X$ ensure that $\psi$ embeds the same distribution as required by the above equation.
Then, since $x \mapsto \E{ \left( X^{B_j}_{\psi} - x \right)^+ }$ is convex condition \eqref{eq:cond_5} follows. 

Now assume that conditions \eqref{eq:cond_1}--\eqref{eq:cond_5} hold.
By the Skorokhod embedding theorem the claim is true for $m=0$.
Inductively, let us assume that the claim is true until $m-1$, i.e. there exist stopping times $\gamma_{1} \leq \dots \leq \gamma_{m-1}$, $\gamma_j \leq H_{B_j}$, such that for $j = 1,\dots, m-1,$
\begin{align*}
\mathfrak{c}^{B_j}(x) &= \E{ \left( X_{\gamma_j} - x \right)^+ } \qquad \forall x \in \R, \\
b(B_j) &= \Prob{ \gamma_j = H_{B_j} }.
\end{align*}
Define
\begin{align}
\varphi_m(x) := \frac{ \mathfrak{c}^{B_m}(x)- \E{ \left( X_{ \gamma_{m-1} } - x \right)^+ } }{b(B_{m-1})} + \left( B_{m-1} - x \right)^+.
\label{eq:definition_varphi_m}
\end{align}
It follows by induction hypothesis and by \eqref{eq:cond_3} and \eqref{eq:cond_5} that $\varphi_m$ defines a call price function, i.e. $\varphi_m$ is convex, $\varphi_m(0) = B_{m-1}$, $-\varphi'_m(0+) = 1$ and $\varphi_m(x) = 0$ for $x \geq B_m$.
Hence, there exists a stopping time $\vartheta_m$ such that
\begin{align}
\varphi_m(x) = \E{ \left( X^{B_{m-1}}_{ \vartheta_m } -x \right)^+ } \qquad \forall x \in \R.
\label{eq:existence_vartheta_m}
\end{align} 
Therefore, by \eqref{eq:definition_varphi_m}, \eqref{eq:existence_vartheta_m} and induction hypothesis,
\begin{alignat}{3}
& \ && \mathfrak{c}^{B_m}(x) \nonumber \\
=&&& \Prob{ \gamma_{m-1} = H_{B_{m-1}} } \cdot \E{ \left( X^{B_{m-1}}_{ \vartheta_m }  -x \right)^+ } + \E{ \left( X_{ \gamma_{m-1} } - x \right)^+ \indicator{ \gamma_{m-1} < H_{B_{m-1}} } } \nonumber \\
=& && \E{ \left( X_{\gamma_m} - x \right)^+ }
\label{eq:existence_increment_embedding}
\end{alignat}
where
\begin{align}
\gamma_m := 
\begin{cases} 
	\gamma_{m-1} &\mbox{if } \gamma_{m-1} < H_{B_{m-1}}, \\
	\gamma_{m-1}+ \vartheta_m & \mbox{if } \gamma_{m-1} = H_{B_{m-1}}. 
\end{cases}
\end{align}
Clearly, $\gamma_m \leq H_{B_m}$ and
\begin{align*}
\Prob{ \gamma_m = H_{B_m} } = \Prob{ X_{\gamma_m} = B_m } = \Prob{ X_{\gamma_m} \geq B_m } = -\mathfrak{c}^{B_m}(B_m-) = b(B_m)
\end{align*}
and 
\begin{align}
\Prob{ \gamma_m \geq H_{B_j} } = \Prob{ \gamma_{m-1} \geq H_{B_j} } = b(B_j) \qquad \forall j \leq m-1.
\end{align}
By the same argument as above, there exists $\gamma_{m+1} \geq \gamma_m $ such that 
\begin{align*}
&\Prob{ \gamma_{m+1} \geq H_{B_j} } = \Prob{ \gamma_{m} \geq H_{B_j} } = b(B_j), \qquad j=1,\dots,m, \\
&\E{ \left( X_{\gamma_{m+1}} - x \right)^+ } = \mathfrak{c}^{B_{m+1}}(x) = c_{\mu}(x).
\end{align*}
Taking $S_t := X_{ \frac{t}{T-t} \wedge \gamma_{m+1} }$ yields a market model.

As for the bounded support claim, we note that choosing a sufficiently large upper bound for the support of $\mu$, will allow us to choose $c_{\mu} = \mathfrak{c}^{B_{m+1}}$ in a way to satisfy \eqref{eq:cond_5} for $j=m$.

Finally, using the notation from the proof, we get by rearranging \eqref{eq:existence_increment_embedding} and writing $\frac{\dd^+}{\dd x}$ for the right-derivative,
\begin{alignat*}{3}
& &&\Prob{ S_T > x, M_T < B_j } \\
= & \ - \ && \frac{\dd^+}{\dd x} \left( \E{ \left( X_{\gamma_j} - x \right)^+ \indicator{ \gamma_j < H_{B_j} } } \right) \\
=& \ - \ &&\frac{\dd^+}{\dd x} \left( \mathfrak{c}^{B_{j+1}}(x) - b(B_j) \left[ \frac{\mathfrak{c}^{B_{j+1}(x)} - \mathfrak{c}^{B_j}(x)}{b(B_j)} + (B_j - x)^+ \right] \right) \\
=& \ - \ &&\frac{\dd \mathfrak{c}^{B_j}}{\dd x}(x+) -b(B_j)
\end{alignat*}
for $0 \leq x < B_j$.
\end{proof}

\subsection{Multiple Maturities}

We now extend Theorem \ref{thm:Characterization Market Model - One Maturity} to the setup of multiple maturities.

For simplicity we assume that the strikes and barriers at each maturity coincide.
Denote the right endpoint of the support of the measure $\mu$ by $r_{\mu} := \inf \big \{ x:  \mu(( x,\infty )) = 0 \big\}$.

Take $0<T_1< \dots < T_k$.
Suppose $c_l({K_1}), \dots, c_l({K_n})$ are the prices for call options with strikes $0 < K_1 < \dots < K_n$ and maturity $T_l$, $l=1,\dots,k$.
Further, let $\bb_l = \big( b_l(B_1), \dots, b_l(B_m) \big)$ be the prices for simple barrier options with barrier levels $S_0 =: B_0 < B_1 \leq \dots \leq B_m$ and (deterministic) maturity $T_l$, $l=1,\dots,k$. 
Set $b_l(B_0) := 1$ and $b_0 \equiv 0$.

\begin{Theorem}[Characterization Market Model -- Multiple Maturities]
\label{thm:Characterization Market Model - Multiple Maturity}
Let Assumption \ref{ass:asset and market} hold. 
Then there exists a market model if and only if 
\begin{itemize}
	\item[1.] there exist call price functions $c_{\mu_l}(\cdot) = \int ( x-\cdot )^+ \mu_l(\dd x)$ which interpolate the given call prices for (deterministic) maturity $T_l$, $l=1,\dots, k$, and satisfy $c_{\mu_1} \leq \dots \leq c_{\mu_k}, \ -c'_{\mu_l}(0+) = 1$. 
	For $B_{m+1}:= r_{\mu_k}$ we set $b_l(B_{m+1}) := -\frac{\partial c_{\mu_l}}{\partial x}(B_{m+1}-)$.
	\item[2.] there exist $\left\{ c^{B_j}_{l} \right\}$ such that for all $j=1,\dots,m+1$ and $l=1,\dots,k$, 
\begin{subequations}
\begin{alignat}{3}
&c_l^{B_j} : \R_{ \geq 0} \to [0,S_0] \qquad  \text{is convex,} &&
\label{eq:cond_a} \\
-& \frac{\dd c_l^{B_j}}{\dd x }(0+) = b_l(B_{j-1}), \quad  c_l^{B_j}(0) = b_l(B_{j-1}) \cdot B_{j-1}, &&
\label{eq:cond_b} \\
-&\frac{\dd c_l^{B_j}}{\dd x }(B_j-) = b_l(B_j), \quad c_l^{B_j}(x) = 0,    \qquad && x \geq B_j, 
\label{eq:cond_c} \\
& c_{l-1}^{B_j}(x) + (b_l - b_{l-1})(B_{j-1}) \cdot ( B_{j-1} - x )^+ \leq c_l^{B_j}(x)  \qquad &&\forall x,
\label{eq:cond_d} \\
& \sum_{j=1}^{m} \left( c_{l}^{B_j}(x) - b_l(B_j) ( B_j - x )^+ \right) + c_{l}^{B_{m+1}}(x) = c_{\mu_l}(x) \quad &&\forall x.
\label{eq:cond_e} 
\end{alignat}
\end{subequations}
\end{itemize}

This market model can be chosen with bounded support (but does not have to).

Furthermore, in the market models characterized by \eqref{eq:cond_a}--\eqref{eq:cond_e} we have for $0 \leq x < B_j$, 
\begin{align}
\Prob{ S_{T_l} > x, M_{T_l} < B_j } = 
- \frac{\dd \mathfrak{c}^{B_j}_{l}}{\dd x}(x+) - b_l(B_j)
\label{eq:joint_prob_mm}
\end{align}
where
\begin{align}
\mathfrak{c}^{B_j}_{l}(x) := \sum_{i=1}^{j-1} \Big( c_l^{B_i}(x) - b_l(B_i) \cdot ( B_i - x )^+ \Big) + c_l^{B_j}(x).
\label{eq:definition_identification_1M}
\end{align}
\end{Theorem}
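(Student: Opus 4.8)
The plan is to run an induction over the number of maturities $k$, using Theorem~\ref{thm:Characterization Market Model - One Maturity} as the base case and mimicking its barrier-by-barrier embedding argument for the inductive step. For necessity, I would start from a market model, apply the Dambis--Dubins--Schwarz time change to write $(S_t)_{t \le T_k} = (X_{\rho_t})_{t \le T_k}$ for a geometric Brownian motion $X$ and a time change $\rho$, and then define, for each maturity $T_l$ and barrier $B_j$, the ``maturity-and-barrier-localized'' call price function $c_l^{B_j}(x) := \E{(X_{\rho_{T_l} \wedge H_{B_j}} - x)^+ \indicator{\rho_{T_l} \ge H_{B_{j-1}}}}$ (with the convention $H_{B_0}=0$), so that $\mathfrak{c}_l^{B_j}$ as in \eqref{eq:definition_identification_1M} equals $\E{(X_{\rho_{T_l} \wedge H_{B_j}} - x)^+}$. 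Properties \eqref{eq:cond_a}--\eqref{eq:cond_c} are then immediate from convexity of $x \mapsto (\cdot - x)^+$ together with the strong Markov property of $X$ at $H_{B_{j-1}}$ and $H_{B_j}$ (the slope at $0+$ reads off $\Prob{\rho_{T_l} \ge H_{B_{j-1}}} = b_l(B_{j-1})$, the slope at $B_j-$ reads off $b_l(B_j)$); the telescoping identity \eqref{eq:cond_e} follows by summing \eqref{eq:definition_identification_1M} for $j=m+1$ and using $c_{\mu_l} = \mathfrak{c}_l^{B_{m+1}}$; and the time-monotonicity condition \eqref{eq:cond_d} comes from the same computation as \eqref{eq:condition_5_necessity} in the one-maturity proof, now comparing the localized price at $T_{l-1}$ with that at $T_l$: on the event $\{\rho_{T_{l-1}} \ge H_{B_{j-1}}\}$ the extra time spent between $T_{l-1}$ and $T_l$ embeds an additional convex increment of a GBM started at $B_{j-1}$, run until it possibly hits $B_j$, which forces the stated convexity/inequality.

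For sufficiency, assume \eqref{eq:cond_a}--\eqref{eq:cond_e} and build the embedding by a double induction: the outer loop is over maturities $l = 1, \dots, k$, and inside each maturity the inner loop is over barriers $j = 1, \dots, m+1$, exactly as in the proof of Theorem~\ref{thm:Characterization Market Model - One Maturity}. Suppose inductively that a stopping time $\gamma^{(l-1)}$ has been constructed with $(S_t)_{t \le T_{l-1}} = (X_{t/(T_{l-1}-t) \wedge \gamma^{(l-1)}})$ a market model for the first $l-1$ maturities, and moreover that $\gamma^{(l-1)}$ carries the finer information $\Prob{\gamma^{(l-1)} \ge H_{B_j}} = b_{l-1}(B_j)$ and $\E{(X_{\gamma^{(l-1)} \wedge H_{B_j}} - x)^+} = \mathfrak{c}_{l-1}^{B_j}(x)$ for every $j$. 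I then want to extend $\gamma^{(l-1)}$ to $\gamma^{(l)} \ge \gamma^{(l-1)}$ realizing the $T_l$-data. The key sub-step is to add, barrier block by barrier block, the ``missing mass'': on the event $\{\gamma^{(l-1)} = H_{B_{j-1}}\}$ (the paths that have reached $B_{j-1}$ by time $T_{l-1}$ but not $B_j$, or that reached $B_j$ already), I form the auxiliary function $\varphi_{l}^{B_j}(x) := \big(c_l^{B_j}(x) - c_{l-1}^{B_j}(x)\big)/b_{l-1}(B_{j-1}) + (B_{j-1}-x)^+$ and check, using \eqref{eq:cond_d}, \eqref{eq:cond_b}, \eqref{eq:cond_c}, that $\varphi_l^{B_j}$ is a genuine call price function of a GBM started at $B_{j-1}$ with total mass pushed up to $B_j$; invoking Skorokhod embedding for GBM gives an independent stopping time $\vartheta_l^{B_j}$, and splicing these on the appropriate events (as in the definition of $\gamma_m$ in the one-maturity proof) produces $\gamma^{(l)}$. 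One then checks $\Prob{\gamma^{(l)} \ge H_{B_j}} = b_l(B_j)$ and $\E{(X_{\gamma^{(l)} \wedge H_{B_j}} - x)^+} = \mathfrak{c}_l^{B_j}(x)$, closing the induction; finally $S_t := X_{t/(T_k - t) \wedge \gamma^{(k)}}$ is the desired market model (with bounded support if $r_{\mu_k} < \infty$, by choosing $\mu_k$ compactly supported).

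The formula \eqref{eq:joint_prob_mm} then drops out exactly as \eqref{eq:joint_prob} did: from the construction, $\E{(X_{\gamma^{(l)}} - x)^+ \indicator{\gamma^{(l)} < H_{B_j}}}$ equals the part of $\mathfrak{c}_l^{B_j}$ not yet committed to crossing $B_j$, and differentiating $x \mapsto \E{(X_{\gamma^{(l)}} - x)^+ \indicator{\gamma^{(l)} < H_{B_j}}}$ from the right gives $-\Prob{S_{T_l} > x, M_{T_l} < B_j}$; substituting $\mathfrak{c}_l^{B_j}(x) = \E{(X_{\gamma^{(l)}\wedge H_{B_j}}-x)^+}$ and using $\Prob{\gamma^{(l)} = H_{B_j}} = b_l(B_j)$ yields $-\frac{\dd \mathfrak{c}_l^{B_j}}{\dd x}(x+) - b_l(B_j)$ for $0 \le x < B_j$. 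I expect the main obstacle to be bookkeeping rather than conceptual: one must verify that the sequential barrier-splicing at maturity $T_l$ does not disturb the already-matched quantities at the earlier barriers and maturities — i.e. that adding $\vartheta_l^{B_j}$ on the event $\{\gamma^{(l-1)} = H_{B_{j-1}}\}$ leaves $\Prob{\cdot \ge H_{B_i}}$ for $i < j$ and all the $T_{l'}$, $l' < l$, marginals intact — and that the compatibility conditions \eqref{eq:cond_d} and \eqref{eq:cond_e} are exactly what is needed (and sufficient) for every $\varphi_l^{B_j}$ to be a legitimate call price function with the right endpoint behaviour; the monotonicity $c_{\mu_1} \le \dots \le c_{\mu_k}$ and the convexity in \eqref{eq:cond_d} are the structural inputs that make all of these checks go through.
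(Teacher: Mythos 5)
Your necessity direction and the outer induction over maturities follow the paper, but the key inductive step of your sufficiency argument has a genuine gap. You reduce the update from $T_{l-1}$ to $T_l$ in barrier block $j$ to a plain Skorokhod embedding of a single ``call price function'' $\varphi_l^{B_j}(x) = \big(c_l^{B_j}(x)-c_{l-1}^{B_j}(x)\big)/b_{l-1}(B_{j-1}) + (B_{j-1}-x)^+$ for a geometric Brownian motion started at the point $B_{j-1}$, spliced on the event $\{\gamma^{(l-1)}=H_{B_{j-1}}\}$. This fails for two reasons. First, the paths that must continue to evolve between $T_{l-1}$ and $T_l$ inside block $j$ are not concentrated at $B_{j-1}$: they are distributed according to the below-$B_j$ part of the measure corresponding to $c_{l-1}^{B_j}$, and for $l\geq 2$ the event $\{\gamma^{(l-1)}=H_{B_{j-1}}\}$ may even be null, since paths that hit $B_{j-1}$ before $T_{l-1}$ have generally moved on. A single independent stopping time of a GBM started at $B_{j-1}$ therefore cannot realize the required update; what is needed is an embedding from a \emph{general} starting law (the block law at $T_{l-1}$ plus an atom of mass $(b_l-b_{l-1})(B_{j-1})$ at $B_{j-1}$ for newly arriving paths) into the law corresponding to $c_l^{B_j}$. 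Second, even granting a point-mass start, $\varphi_l^{B_j}$ need not be a call price function: unlike \eqref{eq:cond_5} in the one-maturity theorem, condition \eqref{eq:cond_d} is only an inequality and does not make $x\mapsto c_l^{B_j}(x)-c_{l-1}^{B_j}(x)+\mathrm{const}\cdot(B_{j-1}-x)^+$ convex; moreover the normalization is off, since $-\frac{\dd}{\dd x}\big(c_l^{B_j}-c_{l-1}^{B_j}\big)(0+) = (b_l-b_{l-1})(B_{j-1})$ rather than $b_{l-1}(B_{j-1})$, so your $\varphi_l^{B_j}$ does not even have total mass one.

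The paper closes exactly this gap with Strassen's theorem rather than a point-mass Skorokhod embedding: setting $s_k^{B_j}(x) := c_{k-1}^{B_j}(x)+(b_k-b_{k-1})(B_{j-1})\,(B_{j-1}-x)^+$, conditions \eqref{eq:cond_a}, \eqref{eq:cond_b} and \eqref{eq:cond_d} give two convex functions with $0\le s_k^{B_j}\le c_k^{B_j}$ and matching value and slope at $0+$, i.e.\ two sub-probability measures with equal mass and mean in convex order; Strassen then provides a stopping time $\varrho_k^{B_j}$ embedding the measure of $c_k^{B_j}$ when the process is started from the measure of $s_k^{B_j}$, and these are spliced recursively across barriers as in \eqref{eq:def_embedding_1}--\eqref{eq:def_embedding_final}. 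No convexity of differences is ever needed, which is precisely why the multi-maturity conditions can be weaker than \eqref{eq:cond_5}. Two smaller points: your final time change $S_t:=X_{t/(T_k-t)\wedge\gamma^{(k)}}$ only controls the $T_k$-marginal, whereas a market model requires the time change to pass through $\eta_l$ at each $T_l$; and the bookkeeping you flag (that splicing at block $j$ does not disturb the hitting probabilities of $B_i$, $i<j$, nor the earlier maturities) is indeed the part the paper verifies explicitly after \eqref{eq:def_embedding_final}, so it must be argued, not just hoped for.
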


\begin{Remark}[Connection to Theorem \ref{thm:Characterization Market Model - One Maturity}]
\label{rem:connection_thms}
Identifying for $j=1,\dots,m+1$,
\begin{align}
\mathfrak{c}^{B_j}(x) = \mathfrak{c}^{B_j}_{1}(x) 
\end{align}
easily shows that Theorem \ref{thm:Characterization Market Model - Multiple Maturity} in the case $l=1$ implies Theorem \ref{thm:Characterization Market Model - One Maturity}.

Conversely, identifying (using the notation of Theorems \ref{thm:Characterization Market Model - One Maturity} and \ref{thm:Characterization Market Model - Multiple Maturity})
\begin{align}
c_1^{B_j}(x) = \mathfrak{c}^{B_j}(x) - \mathfrak{c}^{B_{j-1}}(x) + b(B_{j-1})(B_{j-1} - x)^+
\end{align}
for $j=1,\dots,m+1$, with the convention $\mathfrak{c}^{B_0}(x) := (B_0 - x)^+$,
easily shows that Theorem \ref{thm:Characterization Market Model - One Maturity} implies Theorem \ref{thm:Characterization Market Model - Multiple Maturity} in the case $l=1$.

Therefore, Theorem \ref{thm:Characterization Market Model - Multiple Maturity} provides a slightly different alternative characterization than Theorem \ref{thm:Characterization Market Model - One Maturity}.
\end{Remark}

\begin{proof}[Proof of Theorem \ref{thm:Characterization Market Model - Multiple Maturity}]
Let $(\Omega, \mathcal{F}, \F, \PP)$ be a market model, i.e. for $l=1,\dots,k$,
\begin{alignat*}{5}
\E{ \left( S_{T_l} - K_i \right)^+ } &= c_{\mu_l}(K_i) \qquad &&\forall && i&&=1,\dots,n, \\
\Prob{ M_{T_l} \geq B_j } &= b_l(B_j) \qquad &&\forall && j&&=1,\dots,m.
\end{alignat*}
By continuity of $S$ the Dambis-Dubins-Schwarz time change yields
\begin{align*}
\left( S_t \right)_{t \leq T_k} = \left( X_{\rho_t} \right)_{ t \leq T_k }
\end{align*}
where $X$ is Geometric Brownian motion started at $S_0$ and $\rho_t = \langle \log(S) \rangle_t$.

Define for $j=1,\dots,m+1, \ l=1,\dots,k$ and $x \in \R$, 
\begin{align}
&c_l^{B_j}(x) := \E{ \indicator{ \rho_{T_l} \geq H_{ B_{j-1} } } \left( X_{ \rho_{T_l} \wedge H_{B_j} } - x \right)^+ }.
\end{align}
Similarly as before, properties \eqref{eq:cond_a}--\eqref{eq:cond_c} are easily verified.
We compute
\begin{align*}
c_l^{B_j}(x) - c_{l-1}^{B_j}(x) = \E{ \indicator{ \rho_{T_{l-1}} < H_{ B_{j-1} }, \rho_{T_l} \geq H_{ B_{j-1} } } \left( X_{ \rho_{T_l} \wedge H_{B_j} } - x \right)^+ }
\end{align*}
which defines a measure with mass $(b_l - b_{l-1})(B_{j-1})$ and mean $ (b_l - b_{l-1})(B_{j-1}) \cdot B_{j-1} $ and hence \eqref{eq:cond_d} follows.
As for \eqref{eq:cond_e} we note that 
\begin{alignat*}{3}
& &&\sum_{j=1}^{m} \left( c_{l}^{B_j}(x) - b_l(B_j) ( B_j - x )^+ \right) + c_{l}^{B_{m+1}}(x) \\
=& &&\sum_{j=1}^{m} \E{ \indicator{ \rho_{T_l} \geq H_{ B_{j-1} } } \left( X_{ \rho_{T_l} \wedge H_{B_j} } - x \right)^+ - \indicator{ \rho_{T_l} \geq H_{B_j} } \left( B_j - x \right)^+ } \\
& \ \ + \ \ && \E{ \indicator{ \rho_{T_l} \geq H_{ B_{m} } } \left( X_{ \rho_{T_l} } - x \right)^+} \\
=& &&\sum_{j=1}^{m}\E{ \indicator{ H_{B_{j-1}} \leq \rho_{T_l} < H_{B_j} } \left( X_{\rho_{T_l}} - x \right)^+ } + \E{ \indicator{ \rho_{T_l} \geq H_{ B_{m} } } \left( X_{ \rho_{T_l} } - x \right)^+} \\
=& &&  c_{\mu_l}(x).
\end{alignat*}

Now assume that conditions \eqref{eq:cond_a}--\eqref{eq:cond_e} hold.
By Remark \ref{rem:connection_thms} the claim is true for $k=1$.
Inductively, let us assume that the claim is true until $k-1$, in particular there exist $\eta_l, \ l=1\dots,k-1,$ such that
\begin{subequations}
\begin{align}
c_{l}^{B_j}(x) &= \E{ \indicator{ \eta_{l} \geq H_{B_{j-1}} } \left( X_{ \eta_{l} \wedge H_{B_j} } - x \right)^+ }, \qquad x \in \R, 
\label{eq:quantities_induction_a} \\
b_{l}(B_j) &= \Prob{ M_{\eta_{l}} \geq B_j } = \Prob{ \eta_{l} \geq H_{B_j} }.
\label{eq:quantities_induction_b}
\end{align}
\end{subequations}
By \eqref{eq:cond_a} the two functions
\begin{align*}
&x \mapsto s_k^{B_j}(x) := c_{k-1}^{B_j}(x) + (b_k - b_{k-1})(B_{j-1}) \cdot ( B_{j-1} - x )^+, \\
&x \mapsto c_k^{B_j}(x),
\end{align*}
are convex and by \eqref{eq:cond_d} we have 
\begin{align*}
0 \leq s_k^{B_j} \leq c_k^{B_j}.
\end{align*}
By \eqref{eq:cond_b} the means of the measures corresponding to $s_k^{B_j}$ and $c_k^{B_j}$ coincide, 
\begin{align*}
s_k^{B_j}(0) = b_{k-1}(B_{j-1}) \cdot B_{j-1} + (b_{k}-b_{k-1})(B_{j-1}) \cdot B_{j-1} = b_k(B_{j-1}) \cdot B_{j-1} = c_k^{B_j}(0),
\end{align*}
and the same is true for the masses, 
\begin{align*}
\frac{\partial s_k^{B_j}}{\partial x}(0+) = b_{k-1}(B_{j-1}) + (b_{k} - b_{k-1})(B_{j-1}) = b_k(B_{j-1}) = \frac{\partial c_k^{B_j}}{\partial x}(0+).
\end{align*}
Hence, by \citet{Strassen65} there exists a stopping time $\varrho_k^{B_j}$ which embeds the measure corresponding to $s^{B_j}_k$ into the measure corresponding to $c_k^{B_j}$. 

We define recursively
\begin{subequations}
\begin{align}
\eta_k \wedge H_{B_1} &= 
\begin{cases} 
	\varrho_k^{B_1} &\mbox{if } \eta_{k-1} < H_{B_1}, \\
	H_{B_1} & \mbox{else}, 
\end{cases} 
\label{eq:def_embedding_1} \\
\eta_k \wedge H_{B_2} &= 
\begin{cases} 
	\eta_{k} \wedge H_{B_1}  &\mbox{if } \eta_k \wedge H_{B_1} < H_{B_1},  \\
	\eta_{k} \wedge H_{B_1} + \varrho_k^{B_2} \circ \theta_{ \eta_{k} \wedge H_{B_1} } &\mbox{if } \eta_{k-1} < H_{B_2}, \ \eta_k \wedge H_{B_1} = H_{B_1}, \\
	H_{B_2} & \mbox{else}, 
\end{cases} 
\label{eq:def_embedding_2} \\
&\hspace{2mm}\vdots  \nonumber\\
\eta_k \wedge H_{B_m} &= 
\begin{cases} 
	\eta_{k} \wedge H_{B_{m-1}}  &\mbox{if } \eta_k \wedge H_{B_{m-1}} < H_{B_{m-1}},  \\
	\eta_{k} \wedge H_{B_{m-1}} + \varrho_k^{B_m} \circ \theta_{ \eta_{k} \wedge H_{B_{m-1}} } &\mbox{if } \eta_{k-1} < H_{B_m}, \ \eta_k \wedge H_{B_{m-1}} = H_{B_{m-1}}, \\
	H_{B_m} & \mbox{else}, 
\end{cases} 
\label{eq:def_embedding_m} \\
\eta_k &= 
\begin{cases} 
	\eta_{k} \wedge H_{B_{m}}  &\mbox{if } \eta_k \wedge H_{B_{m}} < H_{B_{m}}, \\
	\eta_{k} \wedge H_{B_{m}} + \varrho_k^{B_{m+1}} \circ \theta_{\eta_{k} \wedge H_{B_{m}}} &\mbox{if } \eta_k \wedge H_{B_{m}} = H_{B_{m}}.
\end{cases} 
\label{eq:def_embedding_final}
\end{align}
\end{subequations}

Next we show that this construction recovers the correct quantities. 
It is already visible from \eqref{eq:def_embedding_1}--\eqref{eq:def_embedding_final} that the condition for $\varrho_k^{B_j}$ is triggered only for paths which hit the level $B_{j-1}$ and hence $\varrho_k^{B_j}$ does not change the probability that $B_i,i \leq j-1$, is hit. However, it can change the probability that $B_j$ is hit.

Denote $\EE^{s_k^{B_j}}\left[ \left( X_{\gamma} - x \right)^+ \right]$ the expectation of $X_{\gamma}$ where $X_0 \sim \nu_k^{B_j}$ and $\nu_k^{B_j}$ is the measure corresponding to $s_k^{B_j}$.
With this definition it follows inductively that for $j=1,\dots,m+1$,
\begin{alignat*}{3}
& \ \ && \E{ \indicator{ \eta_{k} \geq H_{B_{j-1}} } \left( X_{ \eta_{k} \wedge H_{B_j} } - x \right)^+ } \\
= & && \E{ \left( \indicator{ \eta_{k-1} \geq H_{B_{j-1}} } + \indicator{ \eta_{k-1} < H_{B_{j-1}}, \eta_{k} \wedge H_{B_{j-1}} = H_{B_{j-1}} } \right) \left( X_{ \eta_{k} \wedge H_{B_j} } - x \right)^+ } \\
= & && \E{ \indicator{ \eta_{k-1} \geq H_{B_{j-1}} } \left( X_{ \left( \eta_{k-1} + \varrho_k^{B_j} \circ \theta_{\eta_{k-1}} \right) \wedge H_{B_j} } - x \right)^+ } \\
 & \ + \ && \E{ \indicator{ \eta_{k-1} < H_{B_{j-1}}, \eta_{k} \wedge H_{B_{j-1}} = H_{B_{j-1}} } \left( X_{ \left( H_{B_{j-1}} + \varrho_k^{B_j} \circ \theta_{H_{B_{j-1}}} \right) \wedge H_{B_j} } - x \right)^+ } \\
= & && \EE^{ s_k^{B_j} } \left[ \left( X_{\varrho_k^{B_j}} - x \right)^+ \right] = c_{k}^{B_j}(x)
\end{alignat*}
and therefore
\begin{align*}
&\Prob{ \eta_k \geq H_{B_j} } = -\frac{\partial c_k^{B_j}}{\partial x}(B_j-) \stackrel[]{\eqref{eq:cond_c}}{=} b_k(B_j), \\
&\Prob{ \eta_k \wedge H_{B_j} \geq H_{B_i} } = 
\Prob{ \eta_k \wedge H_{B_i} \geq H_{B_i} } = b_k(B_i), \qquad \forall i<j.
\end{align*}
Finally, the embedding property follows as
\begin{alignat*}{3}
& && \E{ \left( X_{\eta_k} - x \right)^+ } \\
=& && \sum_{j=1}^{m} \E{ \indicator{ \eta_k \in \big[ H_{B_{j-1} } , H_{B_j} \big) } \left( X_{\eta_k } - x \right)^+ } + \E{ \indicator{ \eta_k \geq H_{B_{m} } } \left( X_{\eta_k} - x \right)^+ }   \\
=& && \sum_{j=1}^{m} \E{ \indicator{ \eta_k \geq H_{B_{j-1} } } \left( X_{\eta_k \wedge H_{B_j}} - x \right)^+ } - \Prob{ \eta_k \geq H_{B_j} } (B_j - x )^+  \\
& \ \ + \ \ && \E{ \indicator{ \eta_k \geq H_{B_{m} } } \left( X_{\eta_k} - x \right)^+ }   \\
=& &&   \sum_{j=1}^{m} \left( c_{k}^{B_j}(x) - b_{k}(B_j) ( B_j - x )^+ \right) + c_{k}^{B_{m+1}}(x) \stackrel[]{\eqref{eq:cond_e}}{=} c_{\mu_k}(x).
\end{alignat*}
 
The claim regarding the existence of a market model with a bounded support and equation \eqref{eq:joint_prob_mm} follow in the same way as in the proof of Theorem \ref{thm:Characterization Market Model - One Maturity}.
\end{proof}

\section{Interpolation}
\label{sec:interpolation}

Theorems \ref{thm:Characterization Market Model - One Maturity} and \ref{thm:Characterization Market Model - Multiple Maturity} require to \enquote{decompose} some call price functions into a sequence of intermediate convex functions. 
Given this decomposition, equations \eqref{eq:joint_prob} and \eqref{eq:joint_prob_mm} partially specify the joint marginal distributions of $(S,M)$ implied by this decomposition.
Next we discuss how to consistently interpolate these joint probabilities from \eqref{eq:joint_prob} and \eqref{eq:joint_prob_mm} in barriers and time.

\subsection{Computation of Decomposition}

The strength of our main result hinges on the computation of the quantities $\left\{ c^{B_j}_{l} \right\}$ as described in Theorem \ref{thm:Characterization Market Model - Multiple Maturity}.
In practice, a simple and efficient method to compute them would be to discretize in space and solve a linear programming (LP) problem. 
If $N$ is the number of discretization points in space, the number of variables is $\mathcal{O}(N)$ in the one maturity case.
Of course, a naive linear program which optimizes over the joint distribution of maximum and terminal value is also possible, but it has more variables: in the one maturity case it is of order $\mathcal{O}(N^2)$.
In addition, one would also need to make sure that one obtains a valid joint distribution for the stock \emph{and} its maximum, see conditions \eqref{eq:rogers1}--\eqref{eq:rogers2} below.
In the case of $k$ maturities, the number of variables is $\mathcal{O}(k N)$ to compute  $\left\{ c^{B_j}_{l} \right\}$ as described in Theorem \ref{thm:Characterization Market Model - Multiple Maturity}.
When trying to use a naive linear program one would need to ensure several condition regarding the \textit{ordering} of the joint distributions, see Section \ref{subsec:Interpolation in Time} below.
This might not be straightforward to implement.

By changing the objective function of the LP one is able to achieve two things.
Firstly, one can regularize the solution by \enquote{penalizing} e.g. gaps in support or atoms.
Secondly, one can find solutions with additional features such as maximizing the expectation of a given payoff, which would yield a upper robust price bound for this payoff. In particular, one could calculate the maximal price of a simple barrier option with barrier $B \neq B_j$.

\subsection{Interpolation of Barrier Prices}
\label{subsec:Interpolation of Barrier Prices}

In the models characterized by the decomposition $\left\{ c^{B_j}_{l} \right\}$, equation \eqref{eq:joint_prob} of Theorem \ref{thm:Characterization Market Model - One Maturity} partially specifies the joint (tail) distribution for $(S_T,M_T)$ as 
\begin{align}
\bar{F}_T(x,B_j) := \Prob{ S_T > x, M_T \geq B_j } = -\frac{\dd c_{\mu}}{\dd x}(x+) + \left( \frac{\dd \mathfrak{c}^{B_j}}{\dd x}(x+) + b(B_j) \right) \indicator{ x<B_j }
\end{align}
for $x \geq 0$ and $j=0,\dots,m$.

Recall from Theorem \ref{thm:Characterization Market Model - One Maturity} that in order to incorporate a bounded support for the joint distribution we can impose that $B_m = K_n$ and $b(B_m) = 0 = c_{\mu}(K_n)$ for $B_m$ sufficiently large.

In order to obtain an unbounded support, we have to extrapolate.
In this context, note that Theorem \ref{thm:Characterization Market Model - One Maturity} is readily extended to countably many call and barrier options with increasing strikes and barriers.

\citet[Theorem 2.2]{Rogers:93} characterizes the set of all these possible distributions by some integrability condition and two properties of the function
\begin{align}
d(m) = \E{ S_T \big| M_T > m },
\end{align}
namely
\begin{subequations}
\begin{align}
&d(m)\geq m, \label{eq:rogers1}\\
&m \mapsto d(m) \qquad \text{is non-decreasing}. \label{eq:rogers2} 
\end{align}
\end{subequations}
By construction we have for $j=1,\dots,m$,
\begin{subequations}
\begin{align}
&d( B_j ) \geq B_j, \\
&d( B_{j-1} ) \leq d( B_{j} ). 
\end{align}
\end{subequations}
The simplest interpolation is to use linear interpolation in barrier option prices. 
Let $a \in (0,1)$ and $B=a B_{j-1} + (1-a)B_{j}$.
Setting 
\begin{align}
\bar{F}_T(x,B) := a \bar{F}_T(x,B_{j-1}) + (1-a)\bar{F}_T(x,B_{j})
\end{align}
yields a joint (tail) distribution $\bar{F}_T$ which satisfies \eqref{eq:rogers1}--\eqref{eq:rogers2}.


We will refer by $\mathcal{M}_{T}$ to the measure corresponding to $\bar{F}_T$. 

\subsection{Interpolation in Time}
\label{subsec:Interpolation in Time}

An interpolation in time is not as easily obtained because the interpolations for fixed maturities cannot be done independently of each other.

\citet[Theorem 4]{Rost71} characterizes when there exists a martingale $S$ such that
\begin{align*}
(S_{T_1}, M_{T_1}) \sim \mathcal{M}_{T_1}, \qquad (S_{T_2}, M_{T_2}) \sim \mathcal{M}_{T_2}. 
\end{align*}
However, his characterization is not very explicit in our setup.

To see in a simple example that things can indeed go wrong, consider 
\begin{align*}
\delta_{ \{ S_0 \} } \preccurlyeq_{\mathrm{c}} \mu_1 \preccurlyeq_{\mathrm{c}} \mu_2, 
\quad b_{\mu_1} \nleq b_{\mu_2} 
\quad \mathcal{M}_{T_1} := \mathcal{L}\left( X_{\tau^{\mathrm{AY}}_{\mu_1}}, M_{\tau^{\mathrm{AY}}_{\mu_1}} \right), 
\quad \mathcal{M}_{T_2} := \mathcal{L}\left( X_{\tau^{\mathrm{AY}}_{\mu_2}}, M_{\tau^{\mathrm{AY}}_{\mu_2}} \right)
\end{align*}
where $b_{\mu}$ denotes the barycenter function of $\mu$ and $\tau^{\mathrm{AY}}_{\mu}$ denoted the \AY embedding of $\mu$.
It is known, see e.g. \citet{Rogers:93}, that both $\mathcal{M}_{T_1}$ and $\mathcal{M}_{T_2}$ can be embedded starting from the Dirac measure $\delta_{ \{ S_0 \} }$
and that $M_{\tau^{\mathrm{AY}}_{\mu_1}} \sim \mu_1^{\mathrm{HL}}$ and $M_{\tau^{\mathrm{AY}}_{\mu_2}} \sim \mu_2^{\mathrm{HL}}$, respectively, where $\mu^{\mathrm{HL}}$ denotes the Hardy-Littlewood transform of $\mu$.
However, it follows from \citet{Brown98themaximum} that it is not possible to embed $\mathcal{M}_{T_2}$ after $\mathcal{M}_{T_1}$ because of $b_{\mu_1} \nleq b_{\mu_2}$.

Therefore, if the interpolation method from Section \ref{subsec:Interpolation of Barrier Prices} yield $\mathcal{M}_{T_1}$ and $\mathcal{M}_{T_2}$ as above, then it follows that this interpolation is inconsistent with a model.

\subsection{Interpolation via Skorokhod Embedding} 

One theoretical way to specify the joint laws $\left( \mathcal{M}_t \right)_{t \leq T}$ is to use one's favourite Skorokhod embedding $(\tau_t)_{t \leq T}$ as described in the proofs of Theorems \ref{thm:Characterization Market Model - One Maturity} and \ref{thm:Characterization Market Model - Multiple Maturity}.
The functions $c_l^{B_j}$ can be interpreted as the footprint of the marginal law evolution of the process. 
This yields marginal laws as
\begin{align}
\mathcal{M}_t := \mathcal{L}\left( X_{\tau_t}, M_{\tau_t} \right) \qquad \forall t \leq T.
\end{align}

\bibliographystyle{plainnat}
\bibliography{literature_papers3}

\end{document}